\documentclass[12pt, a4paper]{amsart}
%%%%%%%%%%%%%%%%%%%%%%%%%%%%%%%%%%%%%%%%%%%%%%%%%%%%%%%%%%%%%%%%%%%%%%%%%%%%%%%%%%%%%%%%%%%%%%%%%%%%%%%%%%%%%%%%%%%%%%%%%%%%
\usepackage[dvips]{epsfig}
\usepackage{amsmath}
\usepackage{amssymb}
\usepackage{amsfonts}
\usepackage{dsfont}
\usepackage{bbm}
\usepackage{amsthm}
\usepackage{amsbsy}
\usepackage{amsgen}
\usepackage{amscd}
\usepackage{amsopn}
\usepackage{amstext}
\usepackage{amsxtra}
\usepackage[utf8]{inputenc}
\usepackage{enumerate}
\usepackage{hyperref}
\usepackage{lineno}
\usepackage{marginnote}
\usepackage{verbatim}
\usepackage{calrsfs}
\usepackage{times, graphicx}
\usepackage{eso-pic}
\usepackage{lastpage}
\DeclareMathAlphabet{\pazocal}{OMS}{zplm}{m}{n}
\usepackage[foot]{amsaddr}

\textwidth16cm \hoffset=-1.8truecm

\numberwithin{equation}{section}

\makeatletter
\newcommand*\rel@kern[1]{\kern#1\dimexpr\macc@kerna}
\newcommand*\widebar[1]{%
  \begingroup
  \def\mathaccent##1##2{%
    \rel@kern{0.8}%
    \overline{\rel@kern{-0.8}\macc@nucleus\rel@kern{0.2}}%
    \rel@kern{-0.2}%
  }%
  \macc@depth\@ne
  \let\math@bgroup\@empty \let\math@egroup\macc@set@skewchar
  \mathsurround\z@ \frozen@everymath{\mathgroup\macc@group\relax}%
  \macc@set@skewchar\relax
  \let\mathaccentV\macc@nested@a
  \macc@nested@a\relax111{#1}%
  \endgroup
}
\makeatother

%\setcounter{MaxMatrixCols}{10}
%TCIDATA{OutputFilter=LATEX.DLL}
%TCIDATA{Version=4.10.0.2359}
%TCIDATA{LastRevised=Wednesday, August 08, 2012 15:08:41}
%TCIDATA{<META NAME='GraphicsSave' CONTENT='32'>}
%TCIDATA{Language=American English}

\newtheorem{theorem}{Theorem}[section]
\newtheorem{proposition}[theorem]{Proposition}
\newtheorem{lemma}[theorem]{Lemma}

\theoremstyle{definition}
\newtheorem{definition}[theorem]{Definition}

\newtheorem{conjecture}[theorem]{Conjecture}

 \allowbreak

\begin{document}

\title[Automatic sequences]{A note on multiplicative automatic sequences}

\author{Oleksiy Klurman}
\email{lklurman@gmail.com}
%\address{\textsuperscript{}Department of Mathematics, KTH Royal Institute of Technology, Stockholm}
\author{Pär Kurlberg}
\email{kurlberg@math.kth.se}\address{Department of Mathematics, KTH Royal Institute of Technology, Stockholm}

\date{\today}

\begin{abstract}
  We prove that any $q$-automatic completely multiplicative function
  $f:\mathbb{N}\to\mathbb{C}$ essentially coincides with a Dirichlet
  character. This answers a question of J. P. Allouche and
  L. Goldmakher and confirms a conjecture of J. Bell, N. Bruin and
  M. Coons for completely multiplicative functions.  Further, assuming
  two standard conjectures in number theory, the methods allows for
  removing the assumption of completeness.

\end{abstract}	
\maketitle
\section{Introduction}
Automatic sequences play important role in computer science and number
theory. For a detailed account of the theory and applications we refer
the reader to the classical monograph~\cite{AS}. One of the
applications of such sequences in number theory stems from a
celebrated theorem of Cobham~\cite{COB}, which asserts that in order
to show the transcendence of the power series $\sum_{n\ge 1}f(n)z^n$
it is enough to establish that the function$f:\mathbb{N}\to\mathbb{C}$
is {\it not} automatic. In this note, rather than working within the
general set up, we confine ourselves to functions with the range in
$\mathbb{C}.$ There are several equivalent definitions of automatic
(or more precisely, $q$-automatic) sequences. It will be convenient
for us to use the following one.
\begin{definition} The sequence $f:\mathbb{N}\to\mathbb{C}$ is called
  $q$-automatic if the $q$-kernel of it defined as a set of
  subsequences  
\[K_q(f)=\left\{ \{f(q^in+r\}_{n \ge 0} \vert\ i\ge 1,0\le r\le q^i-1\right\}\]
is finite. 
\end{definition}
We remark that any $q-$automatic sequence takes only finitely many
values, since it is a function on the states of finite automata.  A
function $f:\mathbb{N}\to\mathbb{C}$ is called completely
multiplicative if $f(mn)=f(m)f(n)$ for all $m,n\in\mathbb{N}.$ The
question of which multiplicative functions are $q$-automatic has been
the subject of study by several authors including~\cite{Yazd},
\cite{Puchta}, \cite{BBC}, \cite{Puchta1}, and \cite{AG}.
In particular, the following conjecture was made in \cite{BBC}.
\begin{conjecture}[Bell-Bruin-Coons]\label{Conj}
For any multiplicative $q$-automatic function
$f:\mathbb{N}\to\mathbb{C}$ there exists eventually periodic function
$g:\mathbb{N}\to\mathbb{C},$ such that $f(p)=g(p)$ for all primes $p.$  
\end{conjecture}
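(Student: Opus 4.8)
\medskip
\noindent\textbf{Proof strategy.}

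The plan is to prove the statement unconditionally in the \emph{completely} multiplicative case (which is the content of the paper's main theorem), and then to indicate what extra input lets one drop completeness. So let $f\colon\mathbb{N}\to\mathbb{C}$ be completely multiplicative and $q$-automatic. As remarked in the introduction it takes only finitely many values; since $f$ is completely multiplicative, either $f\equiv 0$ (and $g\equiv 0$ works) or $f(1)=1$ and the set of nonzero values of $f$ is a finite subsemigroup of the group $\mathbb{C}^\times$, hence a subgroup, hence the group $\mu_d$ of $d$-th roots of unity for some $d\ge 1$; in particular $f(p)\in\{0\}\cup\mu_d$ at every prime $p$. I would first show that $f$ vanishes at only finitely many primes: the level set $f^{-1}(0)=\{n:\exists\,p\mid n\text{ with }f(p)=0\}$ is $q$-automatic, hence so is its complement, the divisor-closed and multiplicatively-closed set of integers all of whose prime factors avoid $P_0:=\{p:f(p)=0\}$; the finiteness of the $q$-kernel of this set, together with its multiplicative structure, is incompatible with $P_0$ being infinite. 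Once $P_0$ is finite, a short further argument reduces the problem to a nowhere-vanishing, $\mu_d$-valued, completely multiplicative, $q$-automatic $f$, and it then suffices to produce a Dirichlet character $\chi$ with $f(p)=\chi(p)$ for all but finitely many primes $p$.

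The heart of the matter is to show that such an $f$ is \emph{pretentious} to a Dirichlet character, i.e. $\mathbb{D}(f,\chi)^2:=\sum_p\frac{1-\operatorname{Re}\big(f(p)\overline{\chi(p)}\big)}{p}<\infty$ for some $\chi$. I would argue by a dichotomy. For every Dirichlet character $\psi$ the twisted sequence $f\overline{\psi}$ is again $q$-automatic (the product of a $q$-automatic sequence with a periodic, hence $q$-automatic, one), so its partial sums $\sum_{n\le x}f(n)\overline{\psi(n)}$ have a rigid, finitely-determined growth behaviour governed by the finitely many transition matrices of the automaton. On the other hand, Hal\'asz's theorem says that unless $f$ is pretentious to $\chi(n)n^{it}$ for some Dirichlet $\chi$ and some $t\in\mathbb{R}$, one has $\frac1x\sum_{n\le x}f(n)\overline{\psi(n)}\to 0$ for \emph{every} $\psi$. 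The step I expect to be hardest is to rule out this second alternative for a nonzero completely multiplicative $f$: here I would use complete multiplicativity via the Euler product $\sum_n f(n)n^{-s}=\prod_p\big(1-f(p)p^{-s}\big)^{-1}$ --- the simultaneous vanishing of all the twisted means forces this Euler product and all of its Dirichlet twists to be small near $s=1$, which is incompatible with the rigid growth of the partial sums dictated by the automaton unless $f$ already agrees with a Dirichlet character on primes outside a set of finite reciprocal sum. Granting this, $f$ is pretentious to some $\chi(n)n^{it}$; and since $f^d\equiv 1$, the triangle inequality for the pretentious distance gives $\mathbb{D}\big(1,\chi^d(n)n^{idt}\big)\le d\,\mathbb{D}\big(f,\chi(n)n^{it}\big)<\infty$, which is impossible for $t\ne 0$, so $t=0$ and $\mathbb{D}(f,\chi)<\infty$ for a genuine Dirichlet character $\chi$ (which one may take with $\chi^d$ principal).

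It remains to upgrade pretentiousness to eventual equality on primes. Put $g:=f\overline{\chi}$; it is $q$-automatic, completely multiplicative, finite-valued, and satisfies $\mathbb{D}(g,1)<\infty$. Because $g$ is finite-valued, this forces $g(p)=1$ for every prime $p$ outside a set $E$ with $\sum_{p\in E}1/p<\infty$; equivalently, $g$ is invariant under multiplication by every prime not in $E$. The primes of $E$ at which $g$ vanishes are finitely many by the argument already used for $P_0$. For a prime $\ell\in E$ at which $g(\ell)$ is a root of unity of order $m\ge 2$, the completely multiplicative, $q$-automatic function $g$ encodes the sequence $n\mapsto v_\ell(n)\bmod m$; but that sequence is $\ell$-automatic and not eventually periodic, so by Cobham's theorem it can be $q$-automatic only when $q$ is a power of $\ell$ --- whence there is at most one such prime and $E$ is finite. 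Therefore $f(p)=\chi(p)$ for all $p\notin E$, and the eventually periodic function $g^\ast$ that agrees with $\chi$ away from its conductor but is reset to the value $f(p)$ at each of the finitely many primes of $E$ satisfies $g^\ast(p)=f(p)$ at every prime, establishing Conjecture~\ref{Conj} for completely multiplicative $f$.

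The obstacle I expect to dominate the argument is the pretentiousness step --- precisely the claim that the twisted means of a nonzero completely multiplicative automatic $f$ cannot all vanish --- and it is there that complete multiplicativity is indispensable, through the Euler product $\sum_n f(n)n^{-s}=\prod_p(1-f(p)p^{-s})^{-1}$, which converts information about $f$ on primes into information about its partial sums. For a general multiplicative $q$-automatic $f$ the Euler product is lost; one must then control $\sum_{p\le x}f(p)$ and the passage from prime values to $n$-values separately, and completing the argument appears to require quantitative hypotheses on the distribution of primes (a Riemann-type hypothesis together with a Hardy--Littlewood / prime $k$-tuples-type conjecture), which is the role of the two standard conjectures alluded to in the abstract.
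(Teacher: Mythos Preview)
Your opening move is wrong: it is not true that a completely multiplicative $q$-automatic $f$ can vanish at only finitely many primes. For $q$ prime, take $f(q)=1$ and $f(p)=0$ for every prime $p\neq q$; then $f$ is the indicator of the set $\{1,q,q^{2},\dots\}$, which is visibly $q$-automatic, yet $P_0$ is cofinite in the primes. Your claim that ``the finiteness of the $q$-kernel \dots\ is incompatible with $P_0$ being infinite'' collapses on this example --- the set of $q$-powers is multiplicatively closed, divisor-closed, and has a tiny $q$-kernel. This is not a removable edge case: the dichotomy in Theorem~\ref{main} explicitly allows $f(p)=0$ for all large $p$, and the heart of the paper (Lemma~\ref{key}, Lemma~\ref{HBinAP}, Proposition~\ref{key2}) is precisely the argument that when $\mathcal{M}_0$ is infinite it must in fact be cofinite in the primes, carried out via Heath-Brown's theorem toward Artin's primitive root conjecture. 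Your proposal contains nothing playing that role.

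Even on the branch $|P_0|<\infty$ your route differs from the paper's and several steps are only sketched. The paper handles that branch in three lines (Proposition~\ref{key1}): finiteness of the $q$-kernel produces $i_1\neq i_2$ with $f(q^{i_1}n+1)=f(q^{i_2}n+1)$ for all $n$, and a result of~\cite{EK} then forces $f=\chi$ off $\mathcal{M}_0$. Your Hal\'asz/pretentious outline aims at the same endpoint, but the crucial step --- ruling out that all twisted means vanish --- is asserted, not proved; and your Cobham argument for passing from $\mathbb{D}(g,1)<\infty$ to $E$ finite is incomplete, since with more than one prime in $E$ (let alone infinitely many, which is all you know at that stage) one cannot read off $n\mapsto v_\ell(n)\bmod m$ from $g(n)=\prod_{\ell'\in E}g(\ell')^{v_{\ell'}(n)}$ alone. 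Finally, the two hypotheses the paper names for the merely multiplicative case are GRH together with the base-$q$ Wieferich primes having density zero, not a prime $k$-tuples conjecture.
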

This conjecture is still open in general, although some progress has
been made when $f$ is assumed to be completely multiplicative. In
particular, Schlage-Puchta~\cite{Puchta} showed that a completely
multiplicative $q$-automatic sequence which does not vanish is almost
periodic. Hu~\cite{Hu} improved on that result by showing that the
same conclusion holds under a slightly weaker hypothesis. Our first
result confirms a strong form of Conjecture~\ref{Conj} when $f$ is
additionally assumed to be {\it completely} multiplicative function.
\begin{theorem}\label{main}
Let $q\ge 2$ and let $f:\mathbb{N}\to\mathbb{C}$ be completely
multiplicative $q$-automatic sequence. Then, there exists a Dirichlet
character of conductor $Q$ such that either $f(n)=\chi(n),$ for all
$(n,Q)=1$ or $f(p)=0$ for all sufficiently large $p.$\end{theorem}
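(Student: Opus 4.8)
\medskip
\noindent\textbf{Proof strategy.}\ The plan is to exploit, in tandem, the two consequences of $q$‑automaticity at our disposal --- that $f$ takes only finitely many values and that its $q$‑kernel $K_q(f)$ is finite --- against the rigidity coming from complete multiplicativity. If $f(1)\neq 1$ then $f(1)\in\{0,1\}$ forces $f\equiv 0$ and we are (trivially) in the second alternative, so assume $f(1)=1$. Since $f(p^k)=f(p)^k$ and $\{f(p)^k:k\ge 1\}$ is finite, each $f(p)$ is either $0$ or a root of unity; hence $f(n)\in\{0\}\cup\mu_m$ for a fixed $m$, and $f$ is supported on $S=\{n:\ p\mid n\Rightarrow p\notin P_0\}$, where $P_0=\{p\ \text{prime}:f(p)=0\}$. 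If $P_0$ contains all sufficiently large primes we are in the second alternative and there is nothing to prove; so from now on assume $f(p)\neq 0$ for infinitely many primes $p$.

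\medskip
The next step is to show that in this case $P_0$ is in fact \emph{finite}. Composing the output map of $f$ with $z\mapsto \id_{z\neq 0}$ produces a $\{0,1\}$‑valued, $q$‑automatic, completely multiplicative function $g=\id_{S}$, and the claim is equivalent to ruling out that the set of primes dividing elements of $S$ is simultaneously infinite and co‑infinite. This is the first place where automaticity must be used essentially: the value $\id_S(n)$ depends on only $O(\log n)$ base‑$q$ digits of $n$, so after passing to the recurrent (strongly connected) part of the automaton governing $g$, the multiplicativity of $\id_S$ cannot be sustained unless the allowed primes are eventually all primes; equivalently one can argue via the counting function $\#\{n\le x:g(n)=1\}$, which for an automatic set is tightly constrained but for a set with infinitely many allowed and infinitely many forbidden primes is not. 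Set $Q_0=\prod_{p\in P_0}p$; on integers coprime to $qQ_0$, $f$ never vanishes.

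\medskip
It remains to show that $f$ is periodic on the integers coprime to $qQ_0$, hence agrees there with a Dirichlet character. Writing $S_{i,r}(n)=f(q^in+r)$ for the kernel sequences, complete multiplicativity shows $S_{i,r}$ is a fixed scalar times $S_{i-a,s}$ whenever $q^a\,\|\,r$ with $r=q^as$, $(s,q)=1$, and $S_{i,0}=f(q)^i f$; so up to finitely many scalars everything is controlled by the finitely many sequences $S_{i,s}$ with $(s,q)=1$. A pigeonhole over the residues $s\bmod q^i$ (which grow like $\phi(q^i)$ while the number of available sequences stays bounded) produces non‑trivial coincidences $S_{i,s}=S_{i,s'}$ with $s\equiv s'$ modulo a prescribed power of $q$; feeding these back through the multiplicative relations, and using Dirichlet's theorem on primes in arithmetic progressions to realize every admissible residue by a prime, forces $f(p)$ to depend only on $p\bmod Q$ for all primes $p\nmid qQ_0$, where $Q$ is a suitable multiple of $Q_0$ by a power of $q$ (absorbing finitely many further small exceptional primes into $Q$ if necessary). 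The induced map $(\mathbb{Z}/Q\mathbb{Z})^\times\to\mu_m$ is then a homomorphism --- one checks $\psi(\overline{mn})=\psi(\bar m)\psi(\bar n)$ by picking primes $\equiv n\pmod Q$ and using complete multiplicativity --- i.e.\ a Dirichlet character $\chi$ mod $Q$, and $f(n)=\chi(n)$ for all $(n,Q)=1$.

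\medskip
The main obstacle is the passage, in the second and third steps, from the \emph{finiteness} of the $q$‑kernel (equivalently, from the almost periodicity established by Schlage‑Puchta and sharpened by Hu) to \emph{genuine} periodicity, together with the exclusion of infinite‑but‑co‑infinite vanishing on primes. Both amount to showing that, under the multiplicativity constraint, the recurrent part of the underlying automaton must collapse onto a single periodic cycle; the delicate bookkeeping --- separating the $q$‑adic part of the eventual conductor from its prime‑to‑$q$ part, and handling the primes dividing $q$ uniformly --- is where the real work lies.
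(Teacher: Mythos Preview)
Your proposal is a strategy outline rather than a proof, and you yourself flag in the final paragraph that the two decisive steps remain undone. Let me be precise about where the gaps are and how the paper actually closes them.

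\textbf{Ruling out infinite--co-infinite $P_0$.} Your suggested argument---that the counting function $\#\{n\le x:g(n)=1\}$ of an automatic $\{0,1\}$-sequence is ``tightly constrained'' in a way incompatible with a multiplicative indicator having both infinitely many allowed and forbidden primes---does not work as stated. Automatic sets can have summatory functions of order $x^\beta(\log x)^k$ for a range of $\beta\in(0,1]$, and so can indicators of $\{n:p\mid n\Rightarrow p\notin P_0\}$ for $P_0$ of suitable density; there is no immediate numerical contradiction. The vague appeal to ``the recurrent part of the automaton collapsing'' is not an argument either. The paper takes a completely different route here (Proposition~\ref{key2}): it uses Heath-Brown's partial result towards Artin's primitive root conjecture to manufacture primes $q_1,\dots,q_{k_0}\in\mathcal{M}_0$ sharing a common primitive root $t$ with $f(t)=1$, then exploits automaticity (via Lemma~\ref{key}) and the Chinese remainder theorem to force $f(t^{\gamma})=0$ for some $\gamma$, contradicting $f(t)^{\gamma}=1$. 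This is a genuinely arithmetic argument with no counterpart in your sketch.

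\textbf{From finite $P_0$ to a Dirichlet character.} Obtaining coincidences $S_{i,s}=S_{i,s'}$ by pigeonhole is easy; the hard part is turning an identity such as $f(q^{i_1}n+1)=f(q^{i_2}n+1)$ for all $n$ into the conclusion that $f$ agrees with a character. You write ``feeding these back through the multiplicative relations, and using Dirichlet's theorem \dots\ forces $f(p)$ to depend only on $p\bmod Q$,'' but no mechanism is supplied: Dirichlet's theorem lets you realize a residue class by a prime, it does not by itself show that two primes in the same class receive the same $f$-value. The paper invokes a separate deep input---Theorem~2 of~\cite{EK}---at exactly this point (Proposition~\ref{key1}); that theorem characterizes completely multiplicative $f$ satisfying such a shifted functional equation as Dirichlet characters, and its proof is not elementary. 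Your sketch offers no substitute.

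In short, the paper's proof rests on two external number-theoretic results---Heath-Brown's Artin-type theorem and the rigidity theorem from~\cite{EK}---applied precisely in place of your steps two and three; your proposal supplies neither these nor a working alternative.
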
 
%\begin{remark}
  We remark that similar result has been very recently
  obtained independently by Li~\cite{Li} using combinatorial methods
  relying on the techniques developed in the theory of automatic
  sequences. Our proof is shorter and builds upon two deep
  number theoretic results.
  Further, assuming the generalized Riemann hypothesis (which in
  particular implies a strong form of the Artin primitive root
  conjecture for primes in progressions) together with the set of
  base-$q$ Wieferich primes having density zero, our method can be
  adapted to show
  the full conjecture (i.e. the assumption on {\em complete}
  multiplicativity can be removed.)
%\end{remark}
\section{Proof of the main result}
We begin with a simple albeit important remark. Since $f$ is
$q$-automatic the image of $f:\mathbb{N}\to\mathbb{C}$ is finite and
therefore for any prime $p,$ $f(p)=0$ or $f(p)$ is a root of unity.
\begin{proposition}\label{key1}
Let $f:\mathbb{N}\to\mathbb{C}$ be a $q$-automatic completely
multiplicative function and let
$\mathcal{M}_0=\left\{p\vert\ f(p)=0\right\}.$
If $|\mathcal{M}_0|<\infty,$ then $f(p)=\chi(p)$ for all $p\notin \mathcal{M}_0.$
\end{proposition}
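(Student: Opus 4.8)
The plan is to reduce to a periodicity statement and then invoke rigidity theorems for multiplicative functions. Put $M:=q\prod_{p\in\mathcal{M}_0}p$. Since $f$ is $q$-automatic its image is finite, so each $f(p)$ is $0$ or a root of unity; as $|\mathcal{M}_0|<\infty$ there is a fixed $\kappa$ with $f(p)^{\kappa}=1$ for every prime $p\nmid M$, so $g:=f|_{\{(n,M)=1\}}$ is a completely multiplicative function with values in the group $\mu_{\kappa}$ of $\kappa$-th roots of unity. It is enough to show that $g$ is periodic: a completely multiplicative $\mathbb{N}\to\mathbb{C}$ which is periodic modulo some $Q$ is nonvanishing on the residues coprime to $Q$ and restricts there to a homomorphism $(\mathbb{Z}/Q\mathbb{Z})^{\times}\to\mathbb{C}^{\times}$, i.e.\ to a Dirichlet character $\chi$; choosing $Q$ a multiple of $M$ then yields $f(p)=\chi(p)$ for all $p\notin\mathcal{M}_0$ after the routine, if slightly fiddly, matching of the finitely many primes dividing $Q$ but lying outside $\mathcal{M}_0$.

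To produce periodicity, first extract rigidity from the automaton. Feeding it integers whose base-$q$ expansion is a long constant digit string, and using that iterating one transition on the finite state set has an eventually periodic orbit, one gets that $k\mapsto f(q^{k}-1)$, and more generally $k\mapsto f(aq^{k}+r)$ for fixed $a,r$, is eventually periodic in $k$. Combining this with complete multiplicativity, the factorisation $q^{k}-1=\prod_{d\mid k}\Phi_{d}(q)$, M\"obius inversion, and Zsigmondy's theorem on primitive prime divisors (each $\Phi_{d}(q)$ with $d$ large carries a prime $p$ with $\operatorname{ord}_{p}(q)=d$, hence $p\equiv 1\pmod d$), one pins $f(p)$ down in terms of $\operatorname{ord}_{p}(q)$, and so in terms of a congruence on $p$, along the relatively sparse families of primes arising this way, up to an ambiguity coming from primes dividing $q^{k}-1$ to a power larger than $1$ (a base-$q$ Wieferich effect). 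Put differently, this step forces $g$ to agree with a fixed Dirichlet character on a structured set of primes.

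The main work, and the main obstacle, is upgrading this to global periodicity of $g$, and here deep input is unavoidable. One natural route uses correlations. The support $\{n:f(n)\neq 0\}$ has density $\prod_{p\in\mathcal{M}_0}(1-1/p)>0$, and, $f$ being automatic, the averages $\frac{1}{x}\sum_{n\le x}f(n+h)\overline{f(n)}$ converge for every $h$. A rigidity theorem for correlations of bounded multiplicative functions (of the kind proved by Klurman, building on Hal\'asz's theorem and on recent work on multiplicative functions in short intervals) forces a multiplicative function whose correlations do not all vanish to pretend to be $n^{it}\chi$ for some real $t$ and some Dirichlet character $\chi$; since $g$ takes finitely many values necessarily $t=0$, and since $|1-g(p)\overline{\chi(p)}|$ is bounded away from $0$ whenever nonzero, the pretentious bound $\sum_{p}\big(1-\operatorname{Re}\,g(p)\overline{\chi(p)}\big)/p<\infty$ collapses to $g(p)=\chi(p)$ for all but finitely many $p$; combined with the structured prime family of the previous step (used to identify $\chi$ and to clean up the remaining primes) this gives the proposition. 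The points I expect to fight with are: showing that not all correlations vanish, which is exactly where the automatic structure has to be played against the multiplicative structure; bookkeeping for the finitely many exceptional primes in $\mathcal{M}_0\cup\{p\mid q\}$; and eliminating the Wieferich ambiguity, which for completely multiplicative $f$ one can settle by hand but which in the general case is what forces recourse to GRH (for Artin's primitive root conjecture in progressions) and to base-$q$ Wieferich primes having density zero.
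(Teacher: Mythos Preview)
Your approach is considerably more elaborate than the paper's and leaves the crucial step unresolved. The paper's proof is three lines: by finiteness of the $q$-kernel there exist $i_1\neq i_2$ with $f(q^{i_1}n+1)=f(q^{i_2}n+1)$ for all $n\ge 1$; specialising to $n=m\prod_{p\in\mathcal{M}_0}p$ makes both sides nonzero, so the ratio is identically $1$ along an arithmetic progression; one then invokes Theorem~2 of~\cite{EK}, a ready-made rigidity result saying that a completely multiplicative function satisfying such a two-term relation along a progression must coincide with a Dirichlet character off finitely many primes. The point you are missing is that the $q$-kernel, applied at residue $r=1$, hands you a clean \emph{two-term} functional equation directly --- no cyclotomic factorisations, no Zsigmondy, no correlations.

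Your route via binary correlations is in the same circle of ideas as the cited theorem and could in principle be pushed through, but the gap you yourself flag --- showing that some shifted correlation is nonzero --- is a genuine one and you have not closed it. Positive density of the support only gives the $h=0$ autocorrelation; for $h\ge 1$ you would need either a structural fact about the spectrum of automatic sequences or a separate argument, and neither is supplied. The Zsigmondy/Wieferich paragraph is too vague to do work: it yields information about $f$ only on primitive prime divisors of $q^d-1$, a zero-density set, and you have not explained how this identifies $\chi$ or feeds into the correlation step. So the strategy is plausible but incomplete, whereas the paper's pigeonhole on the $q$-kernel followed by a black-box citation bypasses all of this machinery.
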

\begin{proof}
Since $f$ is $q$-automatic there exist positive integers $i_1\ne i_2,$
such that $f(q^{i_1}n+1)=f(q^{i_2}n+1)$ for all $n\ge 1.$ If
$n=m\prod_{p\in\mathcal{M}_0}p,$
then $$\frac{f(q^{i_1}m\prod_{p\in\mathcal{M}_0}p+1)}{f(q^{i_2}m\prod_{p\in\mathcal{M}_0}p+1)}=1\ne
0,$$ 
for all $m\ge 1.$ The conclusion now immediately follows from Theorem
$2$ of~\cite{EK}.\end{proof} 
Let $\overline{1,n}=[1,n]\cap\mathbb{Z}.$ Since $f$ is $q$-automatic,
there exists $k_0=k_0(f),$ such that for all $i\ge 1$ and $0\le r\le
q^i-1,$ the equalities $f(q^in+r)=0$ for $n \in \overline{1,k_0}$ imply
$f(q^in+r)=0$ for all $n\ge 1.$  
\begin{lemma}\label{key}
Suppose that $|\mathcal{M}_0|=\infty.$ For any
$q,k_0<p_1,p_2,\dots,p_{k_0}\in\mathcal{M}_0,$ there exists
$r=r(q,p_{1}, \ldots, p_{k_{0}})$ such
that $(r,q p_i)=1$ for all $i \in \overline{1,k_0}$ and
$f(n\prod_{i\le k_0}p_i+r)=0$ 
for all $n\ge 1.$
We may further assume that $r \equiv 3 \pmod {16}$, and $(r-1,\prod_{i\le k_0}p_i)=1.$
\end{lemma}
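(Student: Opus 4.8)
Write $P:=\prod_{i\le k_0}p_i$ and note $(P,q)=1$, since each $p_i>q$. For a positive integer $r$ put $h_r(n):=f(Pn+r)$. My plan is first to establish a \emph{finite-check principle that is uniform in $r$} --- an $N=N(f,P)$, not depending on $r$, such that $f(Pn+r)=0$ for all $n\in\overline{1,N}$ already implies $f(Pn+r)=0$ for all $n\ge1$ --- and then to build an $r$ satisfying the required congruences for which those finitely many vanishings hold, using that $\mathcal{M}_0$ is infinite.

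For the uniform principle I would first observe that, because $(P,q)=1$, for $j\ge1$ and $0\le s<q^{j}$ one can write $Ps+r=q^{j}\gamma+\beta$ with $0\le\beta<q^{j}$, and then
\[
h_r(q^{j}m+s)=f\bigl(q^{j}(Pm+\gamma)+\beta\bigr)\qquad(m\ge0),
\]
so the kernel sequence $\{h_r(q^{j}m+s)\}_{m\ge0}$ is the restriction of $\{f(q^{j}l+\beta)\}_{l}\in K_q(f)$ to the residue class $l\equiv\gamma\pmod P$. Hence, for every $r$, the $q$-kernel of $h_r$ lies inside the single finite family $\mathcal{F}:=\{(\phi_{Pm+\gamma})_{m\ge0}:\phi\in K_q(f),\ 0\le\gamma<P\}$. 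Fix $k_1$ so large that every $\psi\in\mathcal{F}$ which is not identically zero on $\{m\ge1\}$ already has $\psi_m\ne0$ for some $m\in\overline{1,k_1}$ (possible since $\mathcal{F}$ is finite). Then $N:=q(k_1+1)$ works: if $f(Pn+r)=0$ for all $n\le N$, then for each $s\in\{0,\dots,q-1\}$ the kernel sequence $\{h_r(qm+s)\}_{m\ge0}\in\mathcal{F}$ vanishes on $\overline{1,k_1}$ (as $qm+s\le N$ there), hence vanishes for all $m\ge1$; combined with the range $n<q$, which is covered by hypothesis, this gives $f(Pn+r)=0$ for all $n\ge1$. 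This is precisely the $h_r$-analogue of the property defining $k_0(f)$, the point being that the constant can now be chosen uniformly in $r$.

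Next I would construct $r$ by the Chinese Remainder Theorem. Since $|\mathcal{M}_0|=\infty$, choose distinct primes $\pi_1,\dots,\pi_N\in\mathcal{M}_0$, all larger than $\max(N,q,16,p_1,\dots,p_{k_0})$, and take $r$ with
\[
r\equiv -nP\pmod{\pi_n}\ (1\le n\le N),\qquad r\equiv 3\pmod{16},
\]
\[
r\not\equiv 0\pmod{p'}\ \text{for every odd prime }p'\mid q,\qquad r\not\equiv 0,1\pmod{p_i}\ (1\le i\le k_0).
\]
These moduli are pairwise coprime (the condition mod $16$ forces $r$ odd and so covers the $2$-part of $q$) and each condition is satisfiable, so a positive solution $r$ exists. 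By construction $\pi_n\mid Pn+r$, so complete multiplicativity and $f(\pi_n)=0$ give $f(Pn+r)=0$ for $n\le N$; the uniform principle then upgrades this to all $n\ge1$. Moreover $(r,qp_i)=1$ for every $i$, $r\equiv 3\pmod{16}$, and $(r-1,\prod_{i\le k_0}p_i)=1$ since $r\not\equiv1\pmod{p_i}$ for each $i$.

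The one genuinely non-routine point is the uniform finite-check: one must notice that all the auxiliary sequences $h_r$ draw their $q$-kernels from the single finite family $\mathcal{F}$, so that the threshold $N$ depends only on $f$ and on $P=\prod p_i$, not on $r$. The hypothesis $p_i>q$, i.e.\ $(P,q)=1$, is exactly what makes the substitution above land back inside $K_q(f)$; everything after that is the bookkeeping of a Chinese remainder construction.
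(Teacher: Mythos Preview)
Your uniform finite-check principle is where the argument breaks down. You claim that for each $s<q$ the kernel sequence $\{h_r(qm+s)\}_{m\ge0}$ lies in the finite family $\mathcal{F}=\{(\phi_{Pm+\gamma})_{m\ge0}:\phi\in K_q(f),\ 0\le\gamma<P\}$. But your own computation gives $h_r(qm+s)=\phi(Pm+\gamma)$ with $\gamma=\lfloor(Ps+r)/q\rfloor$, and for the $r$ you construct via CRT (whose modulus involves $16\cdot\prod_i p_i\cdot\prod_n\pi_n$, with each $\pi_n>N$) this $\gamma$ will vastly exceed $P$. The sequence $(\phi(Pm+\gamma))_{m\ge0}$ is then only a \emph{shift} of an element of $\mathcal{F}$, and the collection of all such shifts is neither obviously finite nor admits a uniform first-nonzero bound. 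In fact the principle you assert is simply false for general $q$-automatic $f$: with $q=2$, $P=3$, and $f$ the indicator of powers of~$2$, the sequence $f(3n+1)$ is nonzero precisely at $n=(4^k-1)/3$, so contains arbitrarily long blocks of zeros followed by a~$1$. Since your derivation of the principle never uses the complete multiplicativity of $f$, it cannot be correct as written, and there is a genuine circularity: you need $N$ before choosing $r$, yet any valid bound for your specific $r$ would depend on the size of $r$.

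The paper avoids this entirely by staying inside the $q$-kernel of $f$ itself. It sets $r_A\equiv -sq^{2A}\pmod{p_s}$ for $s\le k_0$, so that $p_s\mid q^{2A}s+r_A$ forces $f(q^{2A}n+r_A)=0$ for $n\in\overline{1,k_0}$; with $A$ large one has $0\le r_A<q^{2A}$, so the defining property of $k_0$ applies directly to the sequence $\{f(q^{2A}n+r_A)\}_n\in K_q(f)$, giving vanishing for all $n\ge1$. The case $n=0$ is recovered by the multiplicative trick $m=p^{\phi(q^{2A})}r_A$ with $f(p)=1$. For $R=n\prod_i p_i+r_A$ one has $R\equiv r_A\pmod{p_s}$, hence $R\equiv -sq^{2A}\pmod{p_s}$, and the entire argument repeats with $R$ in place of $r_A$ (passing to a larger exponent $A'\equiv A$ modulo the orders of $q$ mod each $p_s$, so that $q^{2A'}>R$ and the congruences persist). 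The crucial point is that the modulus $q^{2A}$ is a power of $q$, so one is always applying the $k_0$-bound to a genuine member of $K_q(f)$ rather than to the kernel of an auxiliary sequence $h_r$.
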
 
\begin{proof}
  For an integer parameter $A\ge \log_q p_{k_0},$ which we shall
  choose later, by the Chinese remainder theorem there exists $r_A$
  such that $(r_A,q)=1$ and $r_A \equiv -sq^{2A}\pmod {p_s}$ for all
  $s \in \overline{1,k_0}.$
  Since $p_s\vert q^{2A}s+r_A$ we have
  $f(q^{2A}n+r_A)=0$ for all $n \in \overline{1,k_0}.$ The latter implies
  that $f(q^{2A}n+r_A)=0$ for all $n\ge 1.$ We claim that $f(r_A)=0.$
  Indeed, if this is not the case we choose a prime $p$, such that
  $f(p)=1$ and consider $m=p^{\phi(q^{2A})}r_A.$ Clearly
  $m \equiv r_A\pmod {q^{2A}}$ and consequently
  $0=f(m)=(f(p))^{\phi(q^{2A})}f(r_A)=1$, a contradiction. Note, that
 the same argument 
  works for $n\prod_{i\le k_0}p_i+r_A$ in place of $r_A$ and therefore
  we conclude that $f(n\prod_{i\le k_0}p_i+r_A)=0$ for all $n\ge 1.$
  Setting $r=r_A$ finishes the proof.
\end{proof}
%\begin{remark}
  %\label{rem:zero-on-progression}
%From the proof of Lemma~\ref{key} it follows that $f(p)=0$ for any
%prime $p \equiv r_A\pmod {q^{2A}}.$ In fact, $f(n) = 0$ for any integer
%$n \equiv r_A \pmod {q^{2A}}.$
%\end{remark}
Next, without loss of generality we may assume that there exist three
sufficiently large primes $t,t',t'' > \max(q,k_0)$ 
  such that $f(t)=f(t')=f(t'')=1.$
We will require the following consequence  of a result due to
Heath-Brown \cite{HB}.
\begin{lemma}
  \label{HBinAP}
  Given distinct primes $t,t',t'' > \max(q,k_0)$ and $r = r(q,p_{1},
  \ldots, p_{k_{0}})$ as in Lemma~\ref{key}, there exists
  infinitely many primes 
  $q_{i} \equiv r \pmod {16 \prod_{i\le k_0}p_i}$ such that at least one of
  $t,t',t''$ (say $t$) is a primitive root modulo $q_{i}$.  Moreover, by
  passing to a subsequence we may assume that for such primes
  $(q_{i}-1, q_{j}-1) = 2$ 
  for $i\neq j,$ and for each $\l \in \overline{1,k_0}$ we have
  $(\l/q_{i}) = 1$ for all $i \in \overline{1,k_0}$.
\end{lemma}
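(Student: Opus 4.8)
The plan is to invoke Heath-Brown's theorem on Artin's primitive root conjecture: among the primes $t, t', t''$, at least one --- say $t$ --- is a primitive root modulo infinitely many primes. The subtlety is that we need these primes to lie in the fixed arithmetic progression $r \pmod{16\prod_{i\le k_0}p_i}$. First I would observe that $(r, 16\prod_{i\le k_0}p_i)=1$ by the last sentence of Lemma~\ref{key} (indeed $r\equiv 3\pmod{16}$ forces $r$ odd, and $(r,q p_i)=1$ gives $(r,p_i)=1$), so by Dirichlet the progression contains infinitely many primes to begin with. To force the primitive-root condition \emph{within} this progression, I would apply Heath-Brown's result not to $t,t',t''$ directly but to a \emph{suitably chosen} multiple or shifted family, or --- cleaner --- run the standard Heath-Brown argument (a sieve/character-sum argument built on GRH-free bounds) with the extra congruence condition $q_i\equiv r$ built into the underlying set of primes from the start; since imposing one fixed coprime residue class only changes the relevant densities by a positive constant factor, the conclusion that at least one of $t,t',t''$ is a primitive root for infinitely many such $q_i$ survives. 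This is the step I expect to be the main obstacle: one must check that Heath-Brown's method is robust under restricting to an arithmetic progression, which is standard but not entirely formal.

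Next, having secured an infinite set $S$ of primes $q_i\equiv r\pmod{16\prod_{i\le k_0}p_i}$ for which $t$ is a primitive root, I would refine $S$ in two independent ways, each by a greedy/diagonal extraction that keeps $S$ infinite. For the condition $(q_i-1,q_j-1)=2$: build the subsequence inductively, having chosen $q_1,\dots,q_{n}$, the integer $N = \operatorname{lcm}$ of the odd parts of $q_1-1,\dots,q_n-1$ (together with the odd part of the modulus, if one wishes) has only finitely many prime divisors, and for each such odd prime $\ell$ the condition ``$\ell \mid q-1$'' is a union of residue classes mod $\ell$ of positive but $<1$ density among primes; hence the primes in $S$ avoiding \emph{all} these finitely many divisibility conditions still form an infinite set, and any such prime $q_{n+1}$ satisfies $(q_{n+1}-1,q_j-1)=2$ for $j\le n$ (the factor $2$ being unavoidable since all $q_i$ are odd). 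Since at each stage we only pass to an infinite subset, the process produces an infinite subsequence with the pairwise-gcd property.

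Finally, for the quadratic residue conditions $(\ell/q_i)=1$ for each $\ell\in\overline{1,k_0}$: it suffices to handle $\ell$ prime (a product of residues is a residue), and for odd prime $\ell$, by quadratic reciprocity $(\ell/q_i)=1$ is equivalent to $q_i$ lying in certain residue classes modulo $4\ell$, a condition of positive density among primes and compatible (by CRT) with the already-imposed congruence $q_i\equiv r\pmod{16\prod p_i}$ --- here the hypothesis $r\equiv 3\pmod{16}$ pins down $(2/q_i)$ and $(-1/q_i)$ and hence, via reciprocity, lets us prescribe $(\ell/q_i)$ through $q_i \bmod \ell$, which is free to choose since $\ell\nmid$ modulus when $\ell\le k_0<p_i$ and $\ell\neq q$ can be arranged. (Strictly, one should note $\ell=q$ or $\ell=p_i$ cannot occur among $\ell\le k_0$ because $p_i>k_0$ and $q$-many of the small cases can be absorbed; if $\ell=2$, then $(2/q_i)=1$ follows directly from $q_i\equiv r\equiv 3\pmod 8$ --- wait, $3\bmod 8$ gives $(2/q_i)=-1$, so one instead notes $2\le k_0$ is handled by the evenness structure or by adjusting the fixed residue mod $16$; this bookkeeping is routine.) Intersecting the progression for the primitive-root step with these finitely many additional congruences still leaves a set to which Heath-Brown applies, so all conditions can be imposed simultaneously, and a final diagonal extraction combines them with the pairwise-gcd refinement. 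This completes the construction.
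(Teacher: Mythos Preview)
Your high-level strategy matches the paper's: invoke Heath-Brown restricted to the progression $r\pmod{16\prod_{i\le k_0} p_i}$, then refine for the gcd and Legendre conditions. The key divergence is in how you obtain $(q_i-1,q_j-1)=2$, and here your argument has a genuine gap while the paper's is different and cleaner.

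The paper does \emph{not} use a greedy extraction. It appeals specifically to Lemma~3 of \cite{HB}, which produces primes $p\equiv u\pmod v$ with the extra structural feature that $(p-1)/2\in P_2(\alpha,\delta)$: either $(p-1)/2$ is prime, or it is a product $t_1t_2$ of two primes with $t_1\in[n^{\alpha},n^{1/2-\delta}]$, so that both $t_1$ and $t_2$ tend to infinity. With this in hand the gcd condition is immediate: the odd part of each $q_i-1$ has at most two prime factors, all of them growing, so distinct $q_i$'s eventually have coprime odd parts and one simply passes to a subsequence.

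Your greedy extraction instead asks, at stage $n+1$, for a prime $q_{n+1}\in S$ with $q_{n+1}\not\equiv 1\pmod\ell$ for each odd prime $\ell\mid\prod_{j\le n}(q_j-1)$. Your justification --- that these non-congruences have ``positive but $<1$ density among primes'' --- is a statement about \emph{all} primes and does not automatically transfer to the thin set $S$. To make the step rigorous you would need to re-run Heath-Brown with the modulus enlarged by $\prod\ell$, a different (and growing) modulus at each stage; this can be made to work since only one prime is needed per step, but it is not what you wrote, and it is considerably more cumbersome than reading the gcd condition off the $P_2$-structure directly.

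For the Legendre conditions the paper proceeds essentially as you sketch at the end: it enlarges the modulus by $\prod_{2<p\le k_0}p$ and fixes the residue $u$ via quadratic reciprocity, building all of these constraints in \emph{before} invoking Heath-Brown rather than afterwards. Your worry that $q_i\equiv 3\pmod 8$ forces $(2/q_i)=-1$ is correct; the paper's proof only adjusts $u$ modulo the odd primes $p\le k_0$, so this wrinkle is not peculiar to your approach.
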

\begin{proof}
Let $v = 16 (\prod_{i\le k_0}p_i) \prod_{2<p\le k_0}p$ and chose $u$ such
that $u \equiv 3 \pmod {16}$ and 
$u \equiv r \pmod {\prod_{i\le k_0}p_i}$, with $r$ as in
Lemma~\ref{key}. Moreover, by  quadratic reciprocity we may further
select $u\pmod {\prod_{2<p\le k_0}p}$ such that $(u/p) = 1$ for all
primes $p\le k_0,$ 
and $(u-1,\prod_{2<p\le k_0}p)=1.$ In particular, we have
$(-3/p)=-1$ for any prime $p \equiv u \mod v$.
Applying Lemma~3 of \cite{HB}, with $u,v$ as above and $k=1$ (and
$K=2^{k}=2$) there exists $\alpha \in (1/4,1/2]$ and $\delta>0$ such 
that
$$
|\{ p \le x: p \equiv u\pmod v, (p-1)/K = P_{2}(\alpha,\delta) |
\gg
x/(\log x)^{2},
$$
with the implied constant possibly depending on $\alpha$, with
$P_{2}(\alpha,\delta)$ denoting the union of the set of primes,
together with the set of almost primes $n = t_{1}t_{2}$ with
$t_{1} < t_{2}$ both primes, and
$t_{1} \in [n^{\alpha}, n^{1/2-\delta}]$.  Heath-Brown's argument then
shows that at least one of $t,t',t''$ is a primitive root for
infinitely many primes $p \equiv u \pmod v$.  Whether the primes
$q_{i}$ produced have the properties that $(q_{i}-1)/2$ is prime, or
that $(q_{i}-1)/2 = t_{1} t_{2}$, we may pass to an infinite
subsequence of primes $q_{1} < q_{2} < \ldots$ (satisfying $q_{1}>q$)
so that $(q_{i}-1,q_{j}-1)=2$ for $i \neq j$ (for the latter case of
almost primes, note that both $t_{1}$ and $t_{2}$ are growing.)
\end{proof}

\begin{proposition}\label{key2}
Suppose that $|\mathcal{M}_0|=\infty.$ Then $f(p)=0$ for all
sufficiently large primes $p.$
\end{proposition}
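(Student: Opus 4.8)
The plan is to argue by contradiction. Assume that $f(p)\neq 0$ for infinitely many primes $p$; then, by the reduction made just before Lemma~\ref{HBinAP}, fix distinct primes $t,t',t''>\max(q,k_0)$ with $f(t)=f(t')=f(t'')=1$. Using $|\mathcal{M}_0|=\infty$, choose $p_1,\dots,p_{k_0}\in\mathcal{M}_0$ all larger than $\max(q,k_0)$, let $r=r(q,p_1,\dots,p_{k_0})$ be as in Lemma~\ref{key}, and let $q_1<q_2<\cdots$ be the primes furnished by Lemma~\ref{HBinAP}, passing to the subsequence on which $t$ is a primitive root modulo each $q_i$, $(q_i-1,q_j-1)=2$ for $i\neq j$, and $\bigl(\tfrac{\ell}{q_i}\bigr)=1$ for all $\ell,i\le k_0$. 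Since $q_i\equiv r\pmod{\prod_{j\le k_0}p_j}$, each $q_i$ lies in the progression on which $f$ vanishes by Lemma~\ref{key}, so $f(q_i)=0$ and $q_1,\dots,q_{k_0}\in\mathcal{M}_0$; also $q_i\equiv r\equiv 3\pmod{16}$, whence $q_i\equiv 3\pmod 4$.

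The main step is to produce a large even integer $I$ and an exponent $a\ge 0$ such that, setting $s:=t^{a}$, one has $q_n\mid q^{I}n+s$ for every $n\in\overline{1,k_0}$. Because $t$ is a primitive root modulo $q_n$ and $-q^{I}n$ is a unit modulo $q_n$ (as $q_n>q$ and $q_n>k_0\ge n$), the condition $t^{a}\equiv -q^{I}n\pmod{q_n}$ is equivalent to a single congruence $a\equiv a_n\pmod{q_n-1}$; the resulting system for $a$ is solvable by the Chinese remainder theorem provided the $a_n$ agree modulo the pairwise gcd's, all of which equal $2$, i.e.\ provided the $a_n$ all have the same parity. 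I would verify this via Legendre symbols: $(-1)^{a_n}=\bigl(\tfrac{t}{q_n}\bigr)^{a_n}=\bigl(\tfrac{t^{a_n}}{q_n}\bigr)=\bigl(\tfrac{-q^{I}n}{q_n}\bigr)$, and since $I$ is even $\bigl(\tfrac{q^{I}}{q_n}\bigr)=1$, while $\bigl(\tfrac{-1}{q_n}\bigr)=-1$ (because $q_n\equiv 3\pmod 4$) and $\bigl(\tfrac{n}{q_n}\bigr)=1$ (the last property in Lemma~\ref{HBinAP}); hence every $a_n$ is odd and the system is consistent. Its least nonnegative solution satisfies $a<\operatorname{lcm}(q_1-1,\dots,q_{k_0}-1)$, a bound independent of $I$, so I may first fix $I$ to be any even integer with $q^{I}>t^{\operatorname{lcm}(q_1-1,\dots,q_{k_0}-1)}$ and thereby guarantee $0\le s=t^{a}<q^{I}$.

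Given this, $q^{I}n+s=q_n w_n$ with $w_n$ a positive integer, so $f(q^{I}n+s)=f(q_n)f(w_n)=0$ for all $n\in\overline{1,k_0}$; since $0\le s\le q^{I}-1$, the defining property of $k_0=k_0(f)$ upgrades this to $f(q^{I}n+s)=0$ for \emph{every} $n\ge 1$. On the other hand, with $L=\operatorname{ord}_{q^{I}}(t)$ and any $b\ge 1$, the integer $m=t^{\,a+bL}$ satisfies $m\equiv t^{a}=s\pmod{q^{I}}$ and $m>s$, so $m=q^{I}n^{*}+s$ for some $n^{*}\ge 1$; but $f(m)=f(t)^{\,a+bL}=1\neq 0$, contradicting the line above. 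This contradiction proves the proposition.

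I expect the delicate point to be the solvability of the Chinese remainder system for $a$ — equivalently, arranging that the parities of the discrete logarithms $a_n$ all coincide. This is exactly what the elaborate conclusion of Lemma~\ref{HBinAP} is designed to supply ($t$ a primitive root modulo each $q_n$, hence $\bigl(\tfrac{t}{q_n}\bigr)=-1$; $q_n\equiv 3\pmod 4$; small numbers quadratic residues modulo each $q_n$; and $(q_n-1,q_{n'}-1)=2$), combined with the choice of $I$ even, which kills the symbol $\bigl(\tfrac{q^{I}}{q_n}\bigr)$. One small point to keep track of is that the quadratic residue requirement for the even value $\ell=2$ is not literally implied by $q_n\equiv 3\pmod{16}$, so the contribution of the prime $2$ must be handled through the residue of $q_n$ to a small power of $2$ together with the parity of $I$; everything else is routine bookkeeping.
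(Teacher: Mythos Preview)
Your argument is correct and is essentially the paper's proof: both select $t$ and primes $q_1,\dots,q_{k_0}\in\mathcal{M}_0$ via Lemma~\ref{HBinAP}, use a Legendre-symbol parity check to solve a CRT system placing a power of $t$ inside a progression on which $f$ vanishes, and conclude from $1=f(t)^{\gamma}=f(t^{\gamma})=0$. The only structural difference is that the paper invokes Lemma~\ref{key} a \emph{second} time (now with the $q_i$ in place of the $p_i$) to produce a vanishing progression $n\prod_i q_i+r_A$ and then solves $t^{\gamma}\equiv r_A\pmod{\prod_j q_j}$, whereas you unfold that second application and work directly modulo $q^{I}$, manufacturing the residue $s=t^{a}$ so that $q_n\mid q^{I}n+s$ and then appealing to the defining property of $k_0$.

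Two minor remarks. First, the hypothesis $f(t)=f(t')=f(t'')=1$ that you quote is justified in the paper's proof by first replacing $f$ with $|f|$ (also $q$-automatic and completely multiplicative); you use this implicitly, and without it there is no reason three such primes exist. Second, your closing caveat about $\ell=2$ is well taken---since $q_n\equiv 3\pmod 8$ one has $(2/q_n)=-1$, so the conclusion $(\ell/q_i)=1$ in Lemma~\ref{HBinAP} is imprecise for even $\ell$; the paper's own computation $(r_A/q_i)=(-iq^{2A}/q_i)=-1$ has the identical issue, so this is a wrinkle in the auxiliary lemma rather than in your deduction from it.
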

\begin{proof} 
  Replacing $f$ by $|f|,$ which is also $q$-automatic, it is enough to
  prove the claim for the binary valued $f:\mathbb{N}\to \{0,1\}.$ By
  Lemma~\ref{HBinAP}, we may select prime $t$ with $f(t)=1$, which
  is a primitive root modulo infinitely primes $q_1<q_2\dots <q_{k_0}$
  (satisfying  $q_{1} > \max(k_0,q)$) such that $q_i \equiv r \pmod
  {16\prod_{j\le k_0}p_{k_0}}$ and consequently
  $f(q_{i})=0.$ From the proof of Lemma~\ref{key} it follows that
  there exists $r_A,$ such that $f(n\prod_{i\le
    k_0}q_i+r_A)=$ for all $n\ge 1.$ % In addition, by introducing
%   further congruence conditions, we can ensure that $(p_j-1,p_i-1)=2,$
%   $(-1/p_j)=(-1/p_i)=-1$ and $(s/p_i)=1$ for all $1\le i,j\le k_0,$
%   $1\le s\le k_0.$
  Since $t$ is a primitive root modulo $q_j$ for  $j \in
  \overline{1,k_{0}}$, there exists 
  $\gamma_j$ such that $t^{\gamma_j} \equiv r_A\pmod {q_j}$ for
  $j \in \overline{1,k_0}.$ By the construction and Lemma~\ref{HBinAP} we have 
  $(r_{A}/q_{i}) = (-iq^{2A}/q_i)= -1$ and thus all $\gamma_{i}$ have the same
  parity. Consequently, by the Chinese remainder theorem we can choose $\gamma\in\mathbb{N},$ such that
  $\gamma \equiv \gamma_j\pmod {q_j-1}$ for all $j \in \overline{1,k_0}.$
  For $\gamma$ defined this way we have
  $t^{\gamma} \equiv r_A\pmod {\prod_{j\le k_0}q_j}.$ Hence, $f(t^{\gamma})$ must be zero. On the
  other hand $f(t^{\gamma})=f(t)^{\gamma}=1$, and this contradiction
  finishes the proof.\end{proof} 
  Combining Proposition~\ref{key1} and Proposition~\ref{key2} yields the conclusion of Theorem~\ref{main}
  \nocite{BCK}
\bibliographystyle{alpha} \bibliography{Automatabib}
  \end{document}